\documentclass{article}

\usepackage[affil-it]{authblk}

\usepackage[utf8]{inputenc}
\usepackage{amsmath}
\usepackage{amsfonts}
\usepackage{amssymb}
\usepackage{amsthm}
\usepackage{graphicx}
\usepackage[left=2cm,right=2cm,top=2cm,bottom=2cm]{geometry}
\usepackage{enumerate}
\usepackage{appendix}
\usepackage{mathtools}
\usepackage[colorlinks=true,linkcolor=red,citecolor=black,pdfencoding=auto,psdextra]{hyperref}
\usepackage[english]{babel}
\usepackage{tikz-cd}

\usepackage{tikz}
\usetikzlibrary{arrows.meta, positioning}


\newtheorem{theorem}{Theorem}[section]
\newtheorem{lemma}[theorem]{Lemma}

\theoremstyle{definition}

\theoremstyle{remark}
\newtheorem{remark}[theorem]{Remark}

\newcommand{\dd}{\textnormal{d}}

\numberwithin{equation}{section}

\title{Energetic Derivation and Geometric Reduction of Reaction-Diffusion Systems with Holling-Type Functional Responses}
\author{Jan-Eric Sulzbach$\,^{1}$}
\affil{\small
    $\,^1$ Department of Mathematics, Technical University of Munich, Germany\\
    email: janeric.sulzbach@tum.de\\    
}
\date{}

\begin{document}

\maketitle
\begin{abstract}
This paper presents an energetic derivation of a class of multi-species reaction-diffusion systems incorporating various functional responses, with a focus on and application to ecological models. 
Starting point is a closed reaction network for which we apply the Energetic Variational Approach (EnVarA) to derive the corresponding reaction-diffusion system.
This framework captures both diffusion and nonlinear reaction kinetics, and recovers Lotka–Volterra-type dynamics with species-dependent interactions.
Moreover, we find an open subsystem embedded into the larger closed system, that contains the physical and ecological important quantities. 
By analyzing different asymptotic regimes in the reaction parameters of the resulting system, we formally derive classical Holling type I, II, and III functional responses. 
To rigorously justify these reductions and their dynamical properties, we apply the generalized geometric singular perturbation theory (GSPT) for PDEs and prove the existence of an attracting and invariant slow manifolds near the critical manifold of the reduced system. 
Our analysis not only bridges thermodynamic consistency with ecological modeling but also provides a robust framework to study the dynamics of slow manifolds in multi-scale reaction-diffusion systems.
\end{abstract}

\section{Introduction}

Reaction-diffusion systems (RDSs) play a pivotal role in understanding and modeling complex phenomena across biology, chemistry, and ecology. 
In biology, they are instrumental in describing processes such as pattern formation in developmental biology \cite{turing1990chemical,gierer1972theory,britton1986reaction}, the spread of signaling molecules in cellular communication \cite{kondo2010reaction}, the dynamics of population interactions in ecosystems \cite{murray2007mathematical}, and the application to bacterial communication \cite{kuttler2017reaction}.
In chemistry, RDSs are used to model reaction kinetics in spatially distributed systems, such as the propagation of chemical waves in excitable media \cite{epstein1998introduction} and the behavior of auto-catalytic reactions such as the Gray-Scott model \cite{doelman1997pattern,liang2022reversible,hao2024pattern}.
In ecology, these systems help explain the spatial distribution of species \cite{okubo2001diffusion,cosner2008reaction} and vegetation \cite{klausmeier1999regular,rietkerk2002self,meron2012pattern} , the spread of invasive organisms \cite{shigesada1997biological}, and the dynamics of predator-prey interactions \cite{holmes1994partial,sherratt1997oscillations}.
By capturing the interplay between local reactions and spatial diffusion, RDSs provide a powerful framework for analyzing and predicting the behavior of complex systems, making them indispensable tools in both theoretical and applied research.

From a mathematical perspective the study of RDSs dates back to the works of Lotka and Volterra \cite{lotka1920analytical,volterra1926fluctuations} and has been of interest since then as they exhibit a rich variety of structures.
One of the most striking features is pattern formation, where spatially heterogeneous solutions emerge from homogeneous initial conditions, as famously described by the Turing instability \cite{turing1990chemical}. 
These patterns include spots, stripes, and spirals, which are observed in biological systems \cite{gierer1972theory,koch1994biological}, chemical systems like the Belousov-Zhabotinsky reaction \cite{shanks2001modeling,cassani2021belousov} and ecological systems such as dryland ecosystems \cite{klausmeier1999regular,borgogno2009mathematical,rietkerk2008regular}. 
Another key structure are traveling waves, which represent solutions that propagate through space with a constant speed and shape.
Traveling waves are critical in modeling phenomena such as the spread of epidemics, the invasion of species, and the propagation of nerve impulses \cite{fisher1937wave,kolmogorov1937study}. The interplay between nonlinear reaction terms and diffusion gives rise to these complex structures, highlighting the deep connection between mathematical theory and real-world applications.

A deeper understanding of these complex dynamics can be achieved by studying the underlying mathematical equations from a modeling perspective. 
This paper focuses on a general class of reaction-diffusion systems with varying functional responses, derived through different scaling limits.
To obtain this general system, we employ the Energy Variational Approach (EnVarA), a powerful framework that integrates energetic and variational principles to describe complex systems.
The EnVarA systematically derives governing equations by balancing energy dissipation, conservative forces, and external influences \cite{giga2017variational,wang2022some}.
In the context of reaction-diffusion systems (RDSs), this approach naturally incorporates both reaction kinetics and diffusion processes within a unified thermodynamic framework. 
The EnVarA and its extensions have been successfully applied across various models, ranging from fluid dynamics \cite{de2019non,hsieh2020global,liu2022brinkman} to simple non-isothermal reaction-diffusion systems \cite{wang2020field,liang2022reversible,liu2022well}.
A key ingredient in the EnVarA is the free energy, which is a thermodynamic potential that measures the useful work obtainable from a closed thermodynamic system.
Not only is the form of the free energy of RDS appearing in chemical reactions known both from a theoretical point of view \cite{ge2013dissipation} and from an experimental one \cite{beveridge1989free,ensing2006metadynamics,hu2008free},
but the free energy can also be used in biological and ecological systems \cite{jorgensen2004towards,colombo2021first}.
Thus, the EnVarA is a powerful method with versatile applications. 

In this paper, we present an energetic derivation of a multi-species reaction-diffusion system that includes Lotka-Volterra-like terms for the different population sizes. 
Our approach accounts for species-dependent behavior as well as birth and death processes within populations, providing a direct derivation of reaction-diffusion dynamics that explicitly incorporates different functional responses.\\
The term functional response was first introduced by Solomon \cite{solomon1949natural} and later refined by Holling \cite{holling1959components,holling1959some}, who classified functional responses into three fundamental types (Types I, II, and III)—a classification that remains widely used in today's ecological modeling.
For a comprehensive overview of species interactions in ecological systems, we refer to \cite{arditi2012species}.

Starting point for the model is an open reaction-diffusion system satisfying the law of mass action.
The reactions in the system are represented as follows.
\begin{align}\label{eq: reaction 1}
     \mathcal{A} \xlongrightarrow{r} W, \quad W\xlongrightarrow{m_w} \mathcal{A},
\end{align}
where these two reactions can be seen as interactions of the species $W$ with an external environment, which we will call $\mathcal{A}$.
An example is the rain fall at a rate $r$ and the evaporation of water at a rate $m_w$ in a dryland ecosystem.
Next, we have the interactions between two species with their individual environments
\begin{align}\label{eq: reaction 2}
    V_s \xlongrightarrow{m_s} \mathcal{B}_s, \quad  V_h \xlongrightarrow{m_h} \mathcal{B}_h.
\end{align}
This removal from the reaction can be interpreted as the deaths of the species $V_s,\, V_h$ at rates $m_s$ and $m_h$, respectively.
Lastly, we have the interactions between the species given by the following reactions
\begin{align}\label{eq: reaction 3}
    W+V_s\xlongrightarrow{p_1}  2 V_h, \quad    V_h\xlongrightarrow{p_2} V_s.
\end{align}
Note that this is an open system and does not satisfy the mass balance.

In order to make the system thermodynamically consistent we consider the related closed (super-)system  with reversible reactions
\begin{align}\label{eq: closed reaction}\begin{split}
        &\mathcal{A} \xrightleftharpoons[m_w]{r} W, \quad  V_s \xrightleftharpoons[\varepsilon_1]{m_s} \mathcal{B}_s, \quad  V_h \xrightleftharpoons[\varepsilon_2]{m_h} \mathcal{B}_h,\\
        & W+V_s\xrightleftharpoons[\varepsilon_3]{p_1} 2 V_h, \quad    V_h\xrightleftharpoons[\varepsilon_4]{p_2} V_s.
        \end{split}
\end{align}

\begin{remark}\label{rem: sclaing intro}
    The introduction of the virtual species $\mathcal{A},\,\mathcal{B}_s$ and $\mathcal{B}_h$, representing the atmosphere/hydrosphere and the biosphere respectively in the ecological models we want to study, allows us to close the system.
    Moreover, it is reasonable to assume that the concentrations of these species is large (of order $\mathcal{O}(1/\varepsilon)$) compared to the other species.
    Hence, we can interpret the original system as an open subsystem situated in a sustained environment with influx and efflux from a larger closed universe \cite{ge2013dissipation}.
\end{remark}

In the first part of this paper (see Section \ref{sec: derivation}) we derive the reaction-diffusion system (RDS) corresponding to the above reactions via the energetic variational approach. 
The reaction diffusion system for the concentrations of the species, denoted by lower case letters, and governed by the reaction \eqref{eq: closed reaction} is hence given by
\begin{align}\label{eq: closed reaction system}
    \begin{split}
        \partial_t v_s&= d_{v_s}\Delta v_s- \dot{R}_1 + \dot{R}_2-\dot{R}_3,\\
        \partial_t v_h&= d_{v_h}\Delta v_h + 2 \dot{R}_1-\dot{R}_2 -\dot{R}_4,\\
        \partial_t w&= d_{w}\Delta w- \dot{R}_1 -\dot{R}_5,\\
        \partial_t b_s&= d_{b_s}\Delta b_s+\dot{R}_3,\\
        \partial_t b_h&= d_{b_h}\Delta b_h+\dot{R}_4,\\
        \partial_t a&= d_{a}\Delta a+\dot{R}_5, 
    \end{split}
\end{align}
where the reaction rates $\dot{R}_i$ for the reactions in \eqref{eq: closed reaction} are determined by the law of mass action.
Thus, we have
\begin{align}\label{eq: law of mass action 1}
    \begin{split}
    \dot{R}_1&= p_1 wv_s -\varepsilon_3 v_h^2 ,\\
    \dot{R}_2&= p_2 v_h-\varepsilon_4 v_s ,\\
    \dot{R}_3&= m_s v_s-\varepsilon_1 b_s ,\\
    \dot{R}_4&= m_h v_h -\varepsilon_2 b_h,\\
   \dot{R}_5&= m_w w-ra.\\
    \end{split}
\end{align}
The equations are posed on a bounded domain $\Omega\subset \mathbb{R}^n$ for $n=1,\,2,\,3$ supplemented with non-flux boundary conditions for all species given by
\begin{align}\label{eq: boundary conditions}
    \partial_n v_s=\partial_n v_h=\partial_n w=\partial_n b_s=\partial_n b_h=\partial_n a=0.
\end{align}
In addition, we impose the following initial conditions
\begin{align}\label{eq: initial conditions}
\begin{split}
    v_s(0)=v_{s,0},\, v_h(0)=v_{h,0},\, w(0)=w_0\\
    b_s(0)=b_{s,0},\,b_h(0)=b_{h,0},\, a(0)=a_0,
\end{split}    
\end{align}
satisfying 
\begin{align}
    \int_\Omega v_{s,0}+v_{h,0}+w_0 +b_{s,0}+b_{h,0}+a_0\,\dd x=M_0>0.
\end{align}
Then, the closed system \eqref{eq: closed reaction} satisfies conservation of mass property, i.e.,
\begin{align}\label{eq: cons of mass}
    \int_\Omega \big(v_{s}+v_{h}+w +b_{s}+b_{h}+a\big)(t)\,\dd x=M_0
\end{align}
for any time $t>0$.
Moreover, we can determine the constant stationary solutions $(\bar{v}_s,\bar{v}_h,\bar{w}, \bar{b}_s,\bar{b}_h,\bar{a})$ of the closed and conserved system \eqref{eq: closed reaction system}. 
Using this ansatz we obtain
\begin{align}\label{eq: constant reaction}
\begin{split}
    m_w \bar{w}&= r \bar{a},\quad m_s \bar{v}_s=\varepsilon_1 \bar{b}_s,\quad m_h \bar{v}_h=\varepsilon_2 \bar{b}_h,\\
   & p_1 \bar{w}\bar{v}_s= \varepsilon_3 \bar{v}_h^2 ,\quad p_2 \bar{v}_h=\varepsilon_4 \bar{v}_s, 
\end{split}
\end{align}
together with the mass constraint
\begin{align}\label{eq: mass constraint}
    \bar{v}_s+\bar{v}_h+\bar{w}+ \bar{b}_s+\bar{b}_h+\bar{a}= M_0,
\end{align}
where we assume without loss of generality that $|\Omega|=1$.
Combing \eqref{eq: constant reaction} and \eqref{eq: mass constraint}, we obtain two structurally different kind of constant solutions.
The first one is
 \begin{equation}\label{eq: equilibrium 1}
            \begin{cases}
            \begin{aligned}
                \bar{v}_s &=\frac{M_0}{C}, \quad &\bar{v}_h&= \frac{\varepsilon_4}{p_2}\frac{M_0}{C},\quad &\bar{w}&=\frac{\varepsilon_3\varepsilon_4^2}{p_1p_2^2}\frac{M_0}{C},\\
                \bar{b}_s&=\frac{m_s}{\varepsilon_1}\frac{M_0}{C},\quad &\bar{b}_h&= \frac{m_h\varepsilon_4}{p_2\varepsilon_2}\frac{M_0}{C},\quad &\bar{a}&= \frac{m_w\varepsilon_3\varepsilon_4^2}{rp_1p_2^2}\frac{M_0}{C},
            \end{aligned}
        \end{cases}
    \end{equation}
where $C=1+\frac{\varepsilon_4}{p_2}+\frac{\varepsilon_3\varepsilon_4^2}{p_1p_2^2}+\frac{m_s}{\varepsilon_1}+\frac{m_h\varepsilon_4}{p_2\varepsilon_2}+\frac{m_w\varepsilon_3\varepsilon_4^2}{rp_1p_2^2}$.
The other one is 
\begin{equation}\label{eq: equilibrium 2}
    \begin{cases}
        \begin{aligned}
             \tilde{v}_s &=0, \quad &\tilde{v}_h&= 0,\quad &\tilde{w}&=\frac{r}{m_w+r}M_0,\\
                \tilde{b}_s&=0,\quad &\tilde{b}_h&= 0,\quad &\tilde{a}&=\frac{m_w}{m_w+r}M_0.
        \end{aligned}
    \end{cases}
\end{equation}
In applications one is usually interested in the properties of the first equilibrium, as the second one only consists of two species with non-zero concentration.\\

In the second part of the paper we investigate how different scales of the parameters, which correspond to different time scales in the reactions, can lead to different models via different types of functional responses (see Section \ref{sec: formal reduction}).
Following the classification of Holling \cite{holling1959components,holling1959some} we show first of all how these functional responses can be obtained in a formal way.
Several approaches on justifying and deriving these functional responses have been studied before, such as a quasi-steady state approximation \cite{real1979ecological,segel1989quasi}, asymptotic expansions \cite{dawes2013derivation} or mathematical convergence results \cite{conforto2018reaction,tang2023rigorous}.

However, as the RDSs have intricate and complex dynamics it is key not only to prove the convergence to the reduced systems but also understand how the dynamics of the reduced system relate to the dynamics of the original system.
To overcome this, we work in the context of the geometric singular perturbation theory (GSPT) \cite{fenichel1979geometric,Jones1995,kuehn2015multiple}, which is an approach addressing problems with distinct time scales using a geometric perspective, and which has been recently generalized to systems of partial differential equations \cite{hummel2022slow,kuehn2024infinite,kuehn2025fast} (see Section \ref{sec: gspt ideas}).
The main advantage of the GSPT is that it allows not only the study of the convergence of solutions to the reduced system but also the dynamics of the two systems.
It is based on the work of Fenichel \cite{fenichel1979geometric}, which endeavors to identify the central dynamical structures such as invariant sets and invariant manifolds, present in the phase space of the dynamical system. 
It also seeks to exploit their properties, such as their fast-slow decomposition and the intersections and foliations of various manifolds. 

To conclude this section we apply the GSPT to the to rigorously derive the reduced system with Holling type II functional response \eqref{eq: gspt limit} from the open system \eqref{eq: gspt epsilon} (see Section \ref{sec: gspt applied} and Figure \ref{fig:flow-diagram} below).
\\

Here we summarize the systems and the different limits we consider throughout this paper.\\

\begin{figure}[ht]
\centering
\begin{tikzpicture}[
  node distance=1.5cm and 2cm,
  boxnode/.style={draw, minimum width=3cm, minimum height=1cm, align=center},
  every path/.style={->, >=Stealth}
]

\node[boxnode] (top) {Closed system \eqref{eq: closed reaction}\\ 
internal parameters: reaction rates};
\node[boxnode, below=of top] (middle) {Open system \eqref{eq: reduced system 1}\\ 
internal parameters: reaction rate\\
external parameter: rainfall/ forcing term};
\node[boxnode, below left=of middle] (left) {Holling Type I \eqref{eq: Holling I}};
\node[boxnode, below=of middle] (center) {Holling Type II \eqref{eq: limit holling 2}};
\node[boxnode, below right=of middle] (right) {Holling Type III \eqref{eq: Holling III}};

\draw (top) -- (middle) node[midway, right] {\parbox{3cm}{$\varepsilon_i \to 0$\\ i=1,\dots,7}};
\draw (middle) -- (left) node[midway, left, xshift=-0.2cm] {$p_2\to \infty$};
\draw (middle) -- (center) node[midway, right] {$p_2\to 0$};
\draw (middle) -- (right) node[midway, right, xshift=1.0cm] {\parbox{3cm}{modified reaction\\$p_2\to 0$}};

\end{tikzpicture}
\caption{Scaling Limits and Reduced Models }
\label{fig:flow-diagram}
\end{figure}
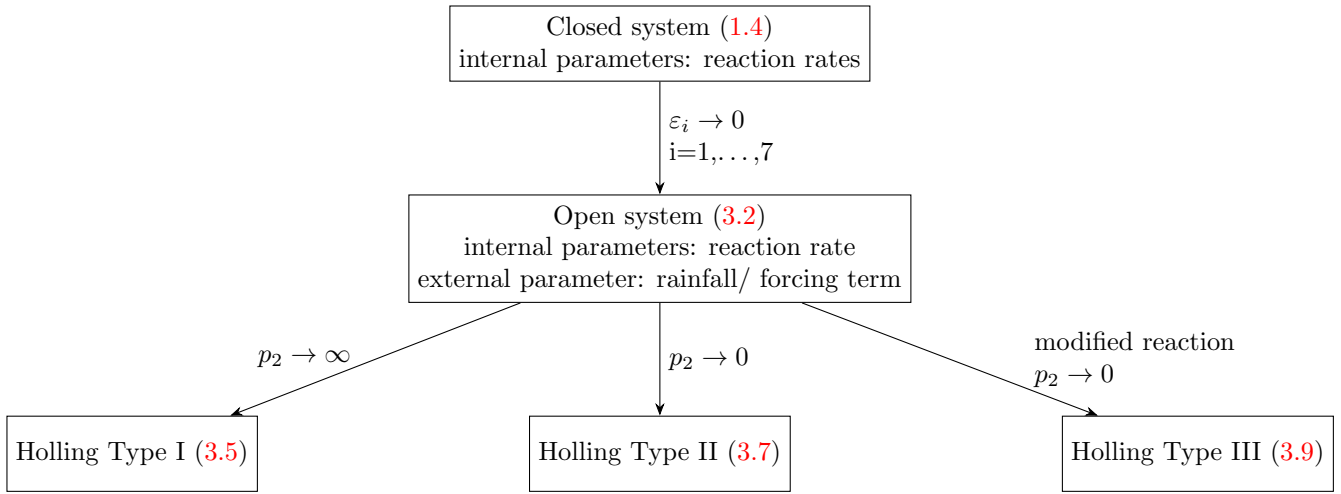

\section{Derivation}\label{sec: derivation}

In this section we derive a general reaction diffusion model with different types of functional responses via the energetic variational approach (EnVarA).
Starting point of the energetic variational approach is an energy-dissipation law for the closed and isothermal system at hand. 
For the derivation of a non-isothermal reaction-diffusion system using a generalized EnVarA based on the first and second law of thermodynamics we refer to \cite{liu2022well}.\\

To keep the notation as concise as possible we set $\textbf{c}=(v_s,v_h,w,b_1,b_2,a)$ and denote its entries by $c_\alpha$.
Then, the energy-dissipation law governing the reaction \eqref{eq: closed reaction} can be written as
\begin{align}\label{eq: energy dissipation law}
    \frac{\dd}{\dd t}\mathcal{F}(v_s,v_h,w,b,a)= \frac{\dd}{\dd t}\mathcal{F}(\mathbf{c})= \mathcal{D}_d+\mathcal{D}_r,
\end{align}
where the free energy of the system is given by 
\begin{align}\label{eq: free energy}
    \mathcal{F}=\int_\Omega \sum_\alpha c_\alpha \big(\ln(c_\alpha/\bar{c}_\alpha)-1\big) \dd x
\end{align}
and $\bar{c}_\alpha$ denote the equilibrium state (see equation \eqref{eq: equilibrium 1}).
The existence of such a Lyapunov functional for a reaction with law of mass action is a well established result see e.g. \cite{ge2016mesoscopic,ge2017mathematical}.

The dissipation consists of two parts coming from the two mechanisms of the reaction diffusion system,
the spatial dissipation given by 
\begin{align}\label{eq: diss dif}
    \mathcal{D}_d= \int_\Omega \sum_\alpha c_\alpha/d_\alpha |\mathbf{u}_\alpha|^2 \dd x,
\end{align}
where $\textbf{u}_\alpha$ is the induced velocity of the diffusion process of each species $c_\alpha$,
and the dissipation due to the chemical reaction is given by
\begin{align}\label{eq: diss reac}
    \mathcal{D}_r= \mathcal{D}_r(\mathbf{R},\dot{\mathbf{R}}),
\end{align}
where we introduce a new state variable, the reaction trajectory $\mathbf{R}$, and its time derivative $\dot{\mathbf{R}}$, the so called reaction rate.

Note that the system satisfies the following kinematics for each species $c_\alpha$
\begin{align}\label{eq: kinematics of reaction}
    \begin{split}
        \partial_t c_\alpha +\nabla(c_\alpha \mathbf{u}_\alpha)&= \sum_{i}\sigma_{i,\alpha} \dot{R}_i ,       
    \end{split}
\end{align}
where $\mathbf{\hat\sigma}$ is the stoichiometric matrix with entries $\sigma_{i,\alpha}$ and where the summation is over each individual reaction.

\subsection{EnVarA for the diffusion}

In a first step we define, for each species $c_\alpha$, a flow map $\mathbf{x}_\alpha(X,t):\Omega \to \Omega$ associated to the velocity $\mathbf{u}_\alpha$, where $X$ denotes the Lagrangian coordinates and $x$ the Eulerian coordinates. 
The flow map for each species $c_\alpha$ is defined via the following ordinary differential equation
\begin{align}
    \frac{\dd}{\dd t} \mathbf{x}_\alpha(X,t) = \mathbf{u}_\alpha(\mathbf{x}_\alpha(X,t),t),\quad \mathbf{x}_\alpha(X,0)=X.
\end{align}
From the least action principle (LAP) it follows that the variation of the action $A$, defined as $A:=\int_0^T \mathcal{F}\dd t$, with respect to the flow map $x$ yields the conservative forces.
To be more precise it holds that
\begin{align}
    \delta_{\mathbf{x}_\alpha} A= \int_0^T \int_\Omega f_{\text{cons}}^\alpha\cdot \delta \mathbf{x}_\alpha\, \dd x\dd t.
\end{align}
In the case of the free energy for the reaction at hand we obtain
\begin{align}
    \delta_{\mathbf{x}_\alpha} A=  \delta \int_0^T \mathcal{F}\dd t =\int_0^T\int_\Omega \nabla c_\alpha \cdot \delta \mathbf{x}_\alpha \, \dd x\dd t.
\end{align}
Next, we consider the dissipation $\mathcal{D}_d$ due to the diffusive contributions and apply the the maximum dissipation principle (MDP), which states that
\begin{align}
    \frac{1}{2}\delta_{\mathbf{u}_\alpha} \mathcal{D}_d= \int_\Omega f_{\text{diss}}^\alpha\cdot \delta \mathbf{u}_\alpha \dd x.
\end{align}
Hence, we obtain that
\begin{align}
    \frac{1}{2}\delta_{\mathbf{u}_\alpha} \mathcal{D}_d=  \int_\Omega  c_\alpha/d_\alpha \mathbf{u}_\alpha \cdot \delta \mathbf{u}_\alpha \dd x.
\end{align}
According to the force balance we have $f_{\text{cons}}^\alpha+f_{\text{diss}}^\alpha=0$, for each species $c_\alpha$ and thus
\begin{align} \label{eq: force balance u}
    \mathbf{u}_\alpha=-d_\alpha \nabla\ln c_\alpha.
\end{align}

\subsection{EnVarA for the reaction}

For the details in the derivation we refer to \cite{wang2020field,liang2022reversible,liu2022well}.
Here, we present an overview of the key concepts.\\

The first step is to replace the concentrations of the species $c_\alpha$, which were useful for determining the mechanical aspects, with the so called reaction trajectory $R_i$.
The reaction trajectory keeps track of each reaction rather than the individual species. 
This method goes back to the work of \cite{perelson1974chemical,oster1974chemical} and has its origins in the work of \cite{de1936thermodynamic}.
Hence, using the mass constraint we can interchange the concentrations with the reaction trajectories of each of the reactions in \eqref{eq: reaction 3} and reduce the dimension of the system by one.
The reaction velocity or reaction rate is denoted by $\dot R_i$ and defined via $\frac{\dd}{\dd t} R_i=:\dot{R}_i$, for $i=1,\dots,5$.
Then, the concentration can be written as 
\begin{align}\label{eq: reaction trajectories}
       \textbf{c}(t)= \textbf{c}(0) +\mathbf{\hat\sigma} \textbf{R}, 
\end{align}
which we can regard as the kinematics for the chemical reaction.

Now, we can rewrite the free energy in terms of the reaction trajectories, i.e., $\mathcal{F}(\textbf{R})=\mathcal{F}(\textbf{c}(\textbf{R}))$.
Similarly, we can rewrite the part of the dissipation due to the reaction in terms of $\textbf{R}$ and $\dot{\textbf{R}}$.
For the chemical part we then have 
\begin{align}\label{eq: engery disspation reaction part}
    \frac{\dd}{\dd t} \mathcal{F}(\textbf{R})= - \mathcal{D}_r(\textbf{R},\dot{\textbf{R}}).
\end{align}
As the reaction terms are usually far from equilibrium one does not expect a quadratic dependence in terms of $\dot{\textbf{R}}$.
Therefore, we assume that we can write the dissipation as
\begin{align*}
    \mathcal{D}_r(\textbf{R},\dot{\textbf{R}})= \int_\Omega \mathcal{G}_r(\textbf{R},\dot{\textbf{R}})\cdot \dot{\textbf{R}}\,\dd x . 
\end{align*}
Combining this with the observation that
\begin{align*}
    \frac{\dd}{\dd t} \mathcal{F}(\textbf{R})=\int_\Omega \delta_{\textbf{R}} \mathcal{F}\cdot \dot{\textbf{R}} \,\dd x
\end{align*}
yields
\begin{align*}
    \mathcal{G}_r(\textbf{R},\dot{\textbf{R}})=-\delta_\textbf{R} \mathcal{F},
\end{align*}
which can be interpreted as a general gradient flow.
A direct computation shows 
\begin{align}\label{eq: variation reaction rate}
   \delta_{R_i} \mathcal{F}(\textbf{R}) =\sum_{\alpha} \ln \frac{c_\alpha}{\bar c_\alpha}\frac{\partial c_\alpha}{\partial R_i}.
\end{align}
We want to point out that this term is also known as the affinity of the reaction \cite{de1936thermodynamic}.
In a sense the affinity plays the role of a "force" that drives the chemical reaction, see \cite{de1936thermodynamic,kondepudi2014modern} for more details.
A general form for the diffusion of the reaction part is given by
\begin{align}\label{eq: detailed diss}
    \mathcal{D}_r(\textbf{R},\dot{\textbf{R}})= \sum_i \dot{\textbf{R}}_i \ln\bigg(\frac{\dot{\textbf{R}}_i}{\eta_i(\textbf{c}(\textbf{R}))}+1\bigg).
\end{align}
In particular, we set $\eta_i(\textbf{c}(\textbf{R}))= R_i^-$ to be consistent with the mass action kinetics.
Here, $ R_i^-$ denotes the part of the reaction trajectory $R_i$ that accounts for the back reaction.
Thus, for the first term in the diffusion of the reaction part we obtain
\begin{align}\label{eq: diffusion of reaction 1}
   \mathcal{D}_{r_1}(R_1,\dot R_1)= \dot R_1 \ln\bigg(\frac{\dot R_1}{\varepsilon_3 v_h^2}+1\bigg),
\end{align}
and similar for the other terms.
Then, combining equations \eqref{eq: variation reaction rate} and \eqref{eq: detailed diss} via the dissipation law \eqref{eq: engery disspation reaction part} and assuming that the law of mass action holds yields
\begin{align}\label{eq: reaction rates}
\begin{split}
    \dot{R}_1&= p_1 wv_s -\varepsilon_3 v_h^2 ,\\
    \dot{R}_2&= p_2 v_h-\varepsilon_4 v_s ,\\
    \dot{R}_3&= m_s v_s-\varepsilon_1 b_s ,\\
    \dot{R}_4&= m_h v_h -\varepsilon_2 b_h,\\
   \dot{R}_5&= m_w w-ra.\\
\end{split}
\end{align}

Then, combining the contribution of the diffusion \eqref{eq: force balance u} and reaction process \eqref{eq: reaction rates} with the kinematics of the system  \eqref{eq: kinematics of reaction} yields exactly the reaction diffusion system \eqref{eq: closed reaction system} given in the introduction.

Hence, we have shown how to derive a reaction-diffusion system from energetic considerations via prior knowledge/ assumptions on the free energy, the dissipation and the reaction kinematics, i.e. in this case the law of mass action kinematics.

\section{Functional Responses and Reduced Models}\label{sec: formal reduction}
The main result of this section  is to show how different scales in the parameters lead to different functional responses in the reduced models.\\

As the concentrations of the species $b_s,\, b_h$ and $a$  are much greater than the concentrations of the other species, we can introduce a parameter $\varepsilon$ to denote the different scales as indicated in Remark \ref{rem: sclaing intro}.
Moreover, this assumption allows us to neglect the influence of diffusion in these equations.
Thus, the rescaled system now reads as
\begin{align}\label{eq: rescaled system full}
\begin{split}
    \partial_t v_s^\varepsilon&= d_{v_s}\Delta v_s^\varepsilon- p_1w^\varepsilon v_s^\varepsilon +\varepsilon_3(v_h^\varepsilon)^2 -m_s v_s^\varepsilon+ p_2 v_h^\varepsilon -\varepsilon_4 v_s^\varepsilon +\varepsilon_1 b_s^\varepsilon,\\
    \partial_t v_h^\varepsilon&= d_{v_h}\Delta v_h^\varepsilon + 2 p_1w^\varepsilon v_s^\varepsilon-2\varepsilon_3 (v_h^\varepsilon)^2 -m_h v_h^\varepsilon -p_2 v_h^\varepsilon  +\varepsilon_4 v_s^\varepsilon +\varepsilon_2 b_h^\varepsilon,\\
    \partial_t w^\varepsilon&= d_{w}\Delta w^\varepsilon- p_1w^\varepsilon v_s^\varepsilon +\varepsilon_3(v_h^\varepsilon)^2 -m_w w^\varepsilon+ ra^\varepsilon,\\ 
    \partial_t b_s^\varepsilon&= \varepsilon_5\big( m_s v_s^\varepsilon-\varepsilon_1 b_s^\varepsilon\big) ,\\
    \partial_t b_h^\varepsilon&=\varepsilon_6\big(m_h v_h^\varepsilon -\varepsilon_2 b_h^\varepsilon \big) ,\\
    \partial_t a^\varepsilon&= \varepsilon_7\big(m_w w^\varepsilon-ra^\varepsilon\big) .
    \end{split}
\end{align}

In addition, we assume from now on that $\varepsilon_j=\varepsilon$ for all $j=1,\dots, 7$.
Then we can reduce the complexity of the system by considering the formal limit as $\varepsilon\to 0$ and obtain
\begin{align} \label{eq: reduced system 1}
    \begin{split}
        \partial_t v_s^0&= d_{v_s}\Delta v_s^0- p_1w^0 v_s^0  -m_s v_s^0+ p_2 v_h^0,\\
    \partial_t v_h^0&= d_{v_h}\Delta v_h^0 + 2 p_1w^0 v_s^0 -m_h v_h^0 -p_2 v_h^0,\\
    \partial_t w^0&= d_{w}\Delta w^0- p_1w^0 v_s^0 -m_w w^0+ r a_0,\\
    \end{split}
\end{align}
subject to the boundary and initial conditions of \eqref{eq: boundary conditions}-\eqref{eq: initial conditions}. 
\begin{remark}
    Here, we briefly motivate how to obtain the formal limit.
    As the last three equations in system \eqref{eq: rescaled system full} are linear we have
    \begin{align}\label{eq: var of const for a}
        a(t)= a_0\text{e}^{-\varepsilon r t} +\int_0^t \text{e}^{-\varepsilon r (t-s)} m_w w^\varepsilon(s)\dd s
    \end{align}
    and similar for $b_s(t)$ and $b_h(t)$. 
    Assuming that $v_s^\varepsilon,v_h^\varepsilon,w^\varepsilon\in L^\infty(\Omega\times (0,T))$, which can be shown rigorously, see e.g. \cite{pierre2010global}, it follows that 
    \begin{align*}
        \varepsilon (v_h^\varepsilon)^2,\, \varepsilon v_s^\varepsilon,\, \int_0^t \text{e}^{-\varepsilon r (t-s)} \varepsilon m_w w^\varepsilon(s)\dd s,\, \int_0^t \text{e}^{-\varepsilon r (t-s)} \varepsilon m_h v_h^\varepsilon(s)\dd s,\, \int_0^t \text{e}^{-\varepsilon r (t-s)} \varepsilon m_s v_s^\varepsilon(s)\dd s \to 0
    \end{align*}
    and $a_0\text{e}^{-\varepsilon r t} \to a_0$ as $\varepsilon\to 0$.
    Thus, plugging equation \eqref{eq: var of const for a} into \eqref{eq: rescaled system full} and passing to the limit yields the desired result.

    For more details of this reduction and the convergence of solutions we refer to \cite{liang2022reversible}
\end{remark}

In the next steps we want to further reduce the system by using  different scaling limits of the parameters.

\subsection{Type I functional response}

We assume that reaction rate of the switching between the species $v_h$ and $v_s$ satisfies $p_2\gg 1$.
This corresponds to the case of an almost instant transition from $v_h$ to $v_s$.
Setting $p_2=\varepsilon^{-1}\tilde{p}_2$ we have
\begin{align}
    \begin{split}
         \partial_t v_s&= d_{v_s}\Delta v_s- p_1wv_s -m_s v_s+ \frac{1}{\varepsilon}\tilde{p}_2 v_h ,\\
        \partial_t v_h&= d_{v_h}\Delta v_h + 2p_1wv_s -m_h v_h -\frac{1}{\varepsilon}\tilde{p}_2 v_h ,\\
        \partial_t w&= d_{w}\Delta w-p_1 wv_s -m_w w+ ra_0.\\
    \end{split}  
\end{align}
The limit system as $\varepsilon\to 0$ is given by
\begin{align}\label{eq: Holling I}
    \begin{split}
        \partial_t v&= d_v\Delta v + p_1 wv -m_v v,\\
        \partial_t w&= d_{w}\Delta w- p_1 wv -m_w w+ ra_0,
    \end{split}
\end{align}
where $v=v_s+v_h$.
This yields a functional response of Holling type I in both the $v$ and $w$-component, i.e., the functional response is given by $f(x)= c x$.

\begin{remark}
    In order to obtain a well-known reaction diffusion system from ecology, the Klausmeier model for dryland ecosystems, we have to adjust the reactions between the three species \eqref{eq: reaction 3} as follows
    \begin{align*}
        W+2V_s\xlongrightarrow{p_1}  3 V_h, \quad    V_h\xlongrightarrow{p_2} V_s.
    \end{align*}
    Then, with the same scaling of $p_2$ as before we obtain the reaction diffusion system
    \begin{align*}
         \partial_t v&= d_v\Delta v + p_1 wv^2 -m_v v,\\
        \partial_t w&= d_{w}\Delta w- p_1 wv^2 -m_w w+ ra_0,
    \end{align*}
\end{remark}
which has a Holling type I functional response in the $w$-component and a quadratic response in the $v$-component.
\subsection{Type II functional response}

In this section we consider the regime when the reaction rate $p_2\ll 1$, which is equivalent to considering the case  $p_1\gg 1$.
This corresponds to the case that the production of the species $v_h$ from $w$ and $v_s$ is much faster than the switch from species $v_h$ to $v_s$.
Hence, we introduce a small parameter $\varepsilon$ such that $p_1=\varepsilon^{-1}\tilde{p}_1$. 
In addition, we have to rescale the concentration $v_s$ by $v_s=\varepsilon \tilde{v}_s$ and also the death rate $m_s$ by $m_s=\varepsilon^{-1}\tilde{m}_s$ as they occur on a faster time scale.
Note, that for now, we do not assume that the diffusion coefficient $d_{v_s}$ is on the faster time scale.
Hence, we obtain the following system
\begin{align}\label{eq: rescaled holling 2}
    \begin{split}
         \varepsilon \partial_t \tilde{v}_s&= \varepsilon     d_{v_s}\Delta \tilde{v}_s- \tilde{p}_1w\tilde{v}_s -\tilde{m}_s\tilde{v}_s+ p_2 v_h ,\\
        \partial_t v_h&= d_{v_h}\Delta v_h + 2\tilde{p}_1 w\tilde{v}_s -m_h v_h -p_2 v_h ,\\
        \partial_t w&= d_{w}\Delta w- \tilde{p}_1 w\tilde{v}_s -m_w w+ a.
    \end{split}
\end{align}
To obtain the formal limit as $\varepsilon \to 0$ we consider an asymptotic expansion in $\varepsilon$, which yields in particular that
\begin{align*}
    \tilde{v}_s =\frac{p_2 v_h}{\tilde{p}_1 w+\tilde{m}_s} +\mathcal{O}(\varepsilon).
\end{align*}
Plugging this into \eqref{eq: rescaled holling 2} and letting $\varepsilon\to 0$ we obtain
\begin{align}\label{eq: limit holling 2}
    \begin{split}
        \partial_t v&= d_{v}\Delta v + \frac{2\tilde{p}_1p_2 v w  }{\tilde{p}_1 w+\tilde{m}_s} -m_v v -p_2 v ,\\
        \partial_t w&= d_{w}\Delta w-\frac{\tilde{p}_1 p_2 v w  }{\tilde{p}_1 w+\tilde{m}_s}  -m_w w+ a.
    \end{split}
\end{align}
This then yields the a functional response of Holling type II in the $w$-component, i.e., the functional response is given by the form $f(x)= xy/(x+C)$.
For a rigorous derivation we refer to Section \ref{sec: gspt reduction}.

\begin{remark}
    In the case when the diffusion coefficient $d_{v_s}$ is also on the faster time scale, i.e., we have $d_{v_s}= \varepsilon^{-1} \tilde d_{v_s}$.
    Then the first equation in \eqref{eq: rescaled holling 2} reads as
    \begin{align*}
        \varepsilon \partial_t \tilde{v}_s&=   \tilde d_{v_s}\Delta \tilde{v}_s- \tilde{p}_1w\tilde{v}_s -\tilde{m}_s\tilde{v}_s+ p_2 v_h.
    \end{align*}
    In this case we have
    \begin{align*}
        \tilde v_s= \big(-\tilde d_{v_s} \Delta + \tilde{p}_1w + \tilde{m}_s\big)^{-1} p_2 v_h.
    \end{align*}
    This expression is well-defined as the eigenvalues of the operator $A:=-\tilde d_{v_s} \Delta + \tilde{p}_1w + \tilde{m}_s $ are positive and bounded away from zero, for any relevant functions $w$.
    Thus, the inverse operator $A^{-1}$ is well-defined.

    For the reduced system as $\varepsilon\to 0$ we then obtain 
    \begin{align*}
        \partial_t v&= d_{v}\Delta v + 2\tilde{p}_1p_2  w A^{-1} v  -m_v v -p_2 v ,\\
        \partial_t w&= d_{w}\Delta w- \tilde{p}_1 p_2 w A^{-1} v -m_w w+ a.
    \end{align*}
\end{remark}

\subsection{Type III functional response}

In this section we briefly present how to generate a type III functional response by modifying the underlying individual reactions.
We replace the reactions \eqref{eq: reaction 3} by
\begin{align}
     V_s+k W\xlongrightarrow{p_1}(k+1) V_h, \quad    V_h\xlongrightarrow{p_2} V_s
\end{align}
for some $k\in \mathbb{N}, k>1$.
With this we can proceed as in the previous steps to obtain the reaction diffusion system. Moreover, assuming that we are in the same regime as for the type II functional response we can reduce the equations and obtain
\begin{align}\label{eq: Holling III}
    \begin{split}
         \partial_t v_h&= d_{v_h}\Delta v_h + \frac{(k+1)\tilde{p}_1 p_2 v_h w^k  }{\tilde{p}_1 w^k +\tilde{m}_s} -m_h v_h -p_2 v_h ,\\
        \partial_t w&= d_{w}\Delta w-\frac{k \tilde{p}_1 p_2 v_hw^k  }{ \tilde{p}_1 w^k +\tilde{m}_s}  -m_w w+ a,
    \end{split}
\end{align}
which corresponds to a Holling type III functional response. 

\section{GSPT Reduction}\label{sec: gspt reduction}

In this section we give an overview and show how we can apply the generalized geometric singular perturbation theory for PDEs to make the reductions from the previous section rigorous, focusing on the derivation of the Holling type II functional response.
The advantage of the GSPT is that it not only provides a quantitative estimate on the convergence of solutions of systems but also provides comparative results on the dynamics of these solutions.\\

\subsection{Basic set-up and key ideas}\label{sec: gspt ideas}

The general equations we consider are singularly perturbed systems of semi-linear PDEs with two distinct time-scales, of the form
\begin{align} \label{eq: syst 1}
    \begin{split}
        \varepsilon_1 \partial_t u&= \varepsilon_2 \Delta u +\varepsilon_3 f(u,v),\\
        \partial_t v &= d\Delta v+ g(u,v).
    \end{split}
\end{align}
Here, $\varepsilon_1,\varepsilon_2,\varepsilon_3$ are the parameters relating to the different time-scales, and whenever one of them is small this yields the singular character of the system.
There are several possibilities of the scaling
\begin{enumerate}[(i)]
    \item The case $0<\varepsilon_1=\varepsilon\ll 1$, $\varepsilon_2=\varepsilon_3=\mathcal{O}(1)$ has been studied in the abstract Banach space framework in \cite{hummel2022slow};
    \item The case $0<\varepsilon_1=\varepsilon_2=\varepsilon\ll 1$, $\varepsilon_3=\mathcal{O}(1)$, also known as fast-reaction case, has been studied for the linear case in \cite{kuehn2024infinite} and for the general abstract setting in \cite{kuehn2025fast}. This is also the set-up for the reduction considered in this section going forward;
    \item The case $0<\varepsilon_1=\varepsilon_3=\varepsilon\ll 1$, $\varepsilon_2=\mathcal{O}(1)$ has been studied in \cite{kuehn2025approximate} with an application to the Fokker-Planck equation.
\end{enumerate}
The nonlinear functions $f,g$ are assumed to be sufficiently smooth.
Then, taking the limit $\varepsilon_1\to 0$ we obtain teh reduced system
\begin{align}\label{eq: syst 2}
    \begin{split}
        0&= f(u,v),\\
        \partial_t v &= d\Delta v+ g(u,v).
    \end{split}
\end{align}
This is an algebraic-differential system that describes the evolution of the slow variable constrained to the set $f(u,v)=0$.

The goal of the singular perturbation theory is to understand the dynamics of \eqref{eq: syst 1} by looking at it as a perturbation of the reduced system \eqref{eq: syst 2}.
To be more precise, let $f$ satisfy the conditions of the implicit function theorem in Banach spaces, where in addition we assume that the spectrum of linear operator $D_u f(u,v)$ is contained in the left half plane and is uniformly bounded away from the imaginary axis.
Then we can define an invariant and attracting normally hyperbolic manifold 
\begin{align}\label{eq: critical manifold 1}
    M_0=\{(u,v):~ f(u,v)=0\}= \{v:~ (h(v),v)\},
\end{align}
where the function $h$, coming from the implicit function theorem, satisfies $h(v)=u$ and $f(h(v),v)=0$.\\
In addition, to better understand the fast transition to the semi-flow on the slow manifold we introduce a new time scale via the following transformation $t=\varepsilon \tau$. 
We call $\tau$ the fast time scale and whereas $t$ is referred to as slow time,
Thus we can rewrite \eqref{eq: syst 1} as follows
\begin{align}\label{eq: syst 3}
    \begin{split}
        \partial_\tau u&= \varepsilon \Delta u+ f(u,v),\\
        \partial_\tau v&= \varepsilon \big(d\Delta v+ g(u,v)\big).
    \end{split}
\end{align}
Here, we observe a problem that is not present in the finite dimensional setting of ODEs.
Note, that on the fast time scale we can never view $\Delta v$ as a bounded perturbation as $\Delta$ is an unbounded differential operator.
Hence, we would encounter a limit of the form "$0\cdot\infty$" and furthermore have that $\Delta v$ is not necessarily “small” in any norm compared to the linear part of the $u$-variable.
To overcome this we introduce a suitable splitting of the slow variable space, depending on a small parameter $\zeta$, into a truly slow part and a part with fast dynamics.

The slow part comes from modes/directions, where the Laplacian yields a sufficiently small bounded
perturbation so that these modes are slow. Moreover, the linear part of the dynamics
on this subspace is supposed to exist also backwards in time. 
The other subspace contains the modes, which are fast as $\Delta$ dominates the small parameter $\varepsilon$. 
The parameter $\zeta$ describes which parts of the linear dynamics in the slow variable space are considered as fast and which ones are considered as slow.
In addition, this parameter relates to the size of spectral gaps in the spectrum of the Laplacian $\Delta$.

Now, we have all the basics together to state the generalized Fenichel’s theorem for semi-linear PDEs.
\begin{theorem}\label{thm: general fenichel}
Suppose $M_0= \{ (u,v):~f(u,v)=0\}$ an attracting normally hyperbolic manifold and suppose that $f$ and $g$ are smooth nonlinear functions.
In addition let $\zeta>0$ be the small parameter inducing the splitting in the slow variable space.
Then for sufficiently small $0<\varepsilon \ll 1$, satisfying $\varepsilon \zeta^{-1}<1$  there exists an invariant and exponentially attracting manifold $M_{\varepsilon,\zeta}$ to system \eqref{eq: syst 1}, called slow manifold, which is $\mathcal{O}(\varepsilon)$ close and diffeomorphic to $M_0$.
Moreover, the restriction of the semi-flow of \eqref{eq: syst 1} to $M_{\varepsilon,\zeta}$ is a small perturbation of the semi-flow of the reduced system \eqref{eq: syst 2}.    
\end{theorem}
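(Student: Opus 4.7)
The plan is to realise $M_{\varepsilon,\zeta}$ as a perturbed graph $u = h_\varepsilon(v) = h(v) + \varepsilon\phi_\varepsilon(v)$ over the same base as the critical manifold, using a Lyapunov--Perron fixed point argument in the spirit of \cite{hummel2022slow,kuehn2024infinite,kuehn2025fast}. After rescaling to the fast time $\tau = t/\varepsilon$, I would linearise the $u$-equation around $h(v)$ and invoke the hypothesis on $L(v) := D_u f(h(v),v)$: being uniformly bounded away from the imaginary axis in the left half-plane, it generates an analytic semigroup with a uniform exponential decay rate, which supplies the exponential dichotomy that drives the contraction. The obstacle highlighted by the text is that on the fast time scale $\varepsilon d\Delta v$ is formally of the form ``$0\cdot\infty$''; the cure is the $\zeta$-splitting $v = v_s + v_f$, with $v_s$ in the spectral subspace of $-d\Delta$ on which $\varepsilon d\Delta$ is norm-bounded by $\zeta$ (genuinely slow and backwards-solvable) and $v_f$ in the complementary high-frequency modes where $\varepsilon d\Delta$ itself provides strong dissipation. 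The standing requirement $\varepsilon\zeta^{-1}<1$ matches exactly the scale at which these two regimes can be patched together.

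Next I would set up the Lyapunov--Perron integral equation. For admissible slow inputs $v(\cdot)$, bounded fast orbits through $u = h(v)+\phi$ satisfy
\begin{align*}
\phi(\tau) = \int_{-\infty}^\tau e^{(\tau-s)L(v(s))}\bigl[\varepsilon\Delta h(v(s)) + \varepsilon\Delta\phi(s) + N(\phi(s),v(s))\bigr]\,\dd s,
\end{align*}
where $N$ collects the superlinear remainder of $f$. Working in an exponentially weighted space of bounded continuous maps valued in a suitable fractional-power / interpolation space of $-\Delta$, sectoriality of $L(v)$ and its uniform spectral gap give standard smoothing estimates for the integral operator. The inhomogeneity $\varepsilon\Delta h(v)$ is $\mathcal{O}(\varepsilon)$ for $v$ in the admissible regularity class; the linear part $\varepsilon\Delta\phi$ splits along $V_s^\zeta\oplus V_f^\zeta$ and is absorbed by the combination of the $\zeta$-bound and the $u$-stability; and the nonlinear piece is locally Lipschitz with small constant on a small ball. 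Banach's fixed point theorem then yields a unique bounded solution $\phi_\varepsilon[v]$ of size $\mathcal{O}(\varepsilon)$, and $h_\varepsilon(v_0) := h(v_0) + \phi_\varepsilon[v](0)$, taken along the orbit with $v(0)=v_0$, defines the candidate slow manifold. Smoothness of the fixed point in its parameters together with the Banach-space implicit function theorem upgrades $h_\varepsilon$ to a diffeomorphism from $M_0$ onto $M_{\varepsilon,\zeta}$ and delivers the $\mathcal{O}(\varepsilon)$-closeness.

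Finally, I would establish exponential attraction by tracking a general orbit $(u,v)$ and the deviation $\psi := u - h_\varepsilon(v)$; combining the invariance of $M_{\varepsilon,\zeta}$ with the uniform dichotomy of $L(v)$ and a Gronwall argument in the weighted norm gives $\|\psi(t)\| \le C e^{-\mu t/\varepsilon}\|\psi(0)\|$ with $\mu>0$ uniform on the admissible range of parameters. Substituting $u = h_\varepsilon(v) = h(v) + \mathcal{O}(\varepsilon)$ into the $v$-equation then produces
\begin{align*}
\partial_t v = d\Delta v + g(h(v),v) + \mathcal{O}(\varepsilon),
\end{align*}
so the restricted semi-flow is an $\mathcal{O}(\varepsilon)$-perturbation of \eqref{eq: syst 2}, as claimed. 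I expect the principal difficulty to be the quantitative bookkeeping around the $\zeta$-splitting: one has to choose interpolation scales that are simultaneously adapted to $-d\Delta$ on $V_s^\zeta\oplus V_f^\zeta$ and to $L(v)$ on the $u$-side, and to verify that the constants in the contraction and attraction estimates remain uniform as $\varepsilon,\zeta\to 0$ under $\varepsilon\zeta^{-1}<1$. The rest of the argument is structurally the finite-dimensional Fenichel proof; it is this functional-analytic step that genuinely distinguishes the semi-linear PDE setting from the ODE case.
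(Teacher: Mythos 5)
The paper does not actually prove Theorem~\ref{thm: general fenichel}; it is imported from the infinite-dimensional GSPT literature (\cite{hummel2022slow,kuehn2025fast}), and your Lyapunov--Perron strategy is indeed the one used there, so the overall plan is the right one. However, there is one genuine structural gap in your construction. You build the slow manifold as a graph $u=h_\varepsilon(v)$ over the \emph{entire} slow variable $v$, and propose to absorb $\varepsilon\Delta$ acting on $v$ by splitting $v=v_s+v_f$ inside the estimates. This cannot work as stated: the Lyapunov--Perron integral equation requires the base dynamics (the variables you parametrize over) to be solvable \emph{backwards} in time with controlled growth, and on the high-frequency subspace $V_f^\zeta$ the operator $\varepsilon\Delta$ is not a bounded perturbation --- precisely the ``$0\cdot\infty$'' issue the theorem is designed to handle. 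The correct resolution, and the one the paper itself uses when it writes $M_{\varepsilon,\zeta}=\{v_s:\ h^{\varepsilon,\zeta}(v_s)=(u,v_f)\}$ and later in Theorem~\ref{thm: gspt example}, is to slave \emph{both} $u$ and the fast modes $v_f$ to the truly slow modes $v_s$: the manifold is a graph over $\mathrm{P}_s v$ only, the backward-in-time branch of the variation-of-constants formula runs only over $V_s^\zeta$ (where the linear flow extends backwards), and the $V_f^\zeta$-component enters through a forward integral exploiting the strong dissipation of $\varepsilon\Delta$ there. Consequently the $\mathcal{O}(\varepsilon)$-closeness and the diffeomorphism are to the restricted critical manifold $M_{0,\zeta}$ (the part of $M_0$ with $\mathrm{P}_f v=0$), not to all of $M_0$, and the reduced flow lives on the projected equation $\partial_t v_s=d\Delta v_s+\mathrm{P}_s g(\cdot)$ rather than on the full $v$-equation.

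A second, more routine point: your integral operator uses the frozen-coefficient semigroup $\mathrm{e}^{(\tau-s)L(v(s))}$ along a time-dependent base orbit; to make the contraction rigorous one either linearizes at a fixed reference point and puts the difference into the nonlinearity, or works with the evolution family generated by $\tau\mapsto L(v(\tau))$, verifying that the uniform sectoriality and spectral gap of $D_uf$ transfer to uniform exponential dichotomy estimates for that family. With these two corrections your argument matches the construction in \cite{kuehn2025fast} that the paper relies on.
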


As by \eqref{eq: critical manifold 1} the critical manifold $M_0$ can be expressed as a graph of a function $h$ the above theorem implies that the slow manifold can obtained by the graph of a perturbed function $h^{\varepsilon,\zeta}$, i.e. 
\begin{align*}
    M_{\varepsilon,\zeta}=\{ v_s:~h^{\varepsilon,\zeta}(v_s)=(u,v_f)\},
\end{align*}
where $v_s$ denotes a function from the slow subspace and $v_f$ a function from the fast subspace.
Moreover, we obtain that the semi-flow on the slow manifold $M_{\varepsilon,\zeta}$ is given by
\begin{align}
    \partial_t v_s= d\Delta v_s+ \text{P}_s g(h^{\varepsilon,\zeta}_u(v_s),h^{\varepsilon,\zeta}_f(v_s),v_s),
\end{align}
where $\text{P}_s$ denotes the projection onto the slow subspace.
For more precise statements we refer to \cite{kuehn2025fast}.

\subsection{Application of the GSPT}\label{sec: gspt applied}

In this part, as an example of the abstract result, we apply Theorem \ref{thm: general fenichel} to the derivation of the Holling type II functional response.

Let $\Omega\subset \mathbb{R}^n$ be an open and bounded domain with sufficiently smooth boundary and in what follows we set $n=2$.
Then, we consider the two systems 
\begin{align}\label{eq: gspt epsilon}
    \begin{split}
         \varepsilon \partial_t \tilde{v}_s^\varepsilon&= \varepsilon     d_{v_s}\Delta \tilde{v}_s^\varepsilon- \tilde{p}_1w^\varepsilon\tilde{v}_s^\varepsilon -\tilde{m}_s\tilde{v}_s^\varepsilon+ p_2 v_h^\varepsilon ,\\
        \partial_t v_h^\varepsilon&= d_{v_h}\Delta v_h^\varepsilon + 2\tilde{p}_1 w^\varepsilon \tilde v_s^\varepsilon -m_h v_h^\varepsilon -p_2 v_h^\varepsilon ,\\
        \partial_t w^\varepsilon &= d_{w}\Delta w^\varepsilon - \tilde{p}_1 w^\varepsilon \tilde{v}_s^\varepsilon -m_w w^\varepsilon+ a,\\
        \partial_n \tilde{v}_s^\varepsilon&=0, \quad \partial_n v_h^\varepsilon=0,\quad \partial_n w^\varepsilon=0,\\
        \tilde{v}_s^\varepsilon(0)&= \tilde{v}_{s,0}^\varepsilon,\quad v^\varepsilon_{h}(0)= v^\varepsilon_{h,0},\quad  w^\varepsilon(0)= w^\varepsilon_0
    \end{split}
\end{align}
and
\begin{align}\label{eq: gspt limit}
    \begin{split}
         \tilde{v}_s &=\frac{p_2 v_h}{\tilde{p}_1 w+\tilde{m}_s} \\
        \partial_t v_h&= d_{v_h}\Delta v_h + 2\tilde{p}_1 w\tilde{v}_s -m_h v_h -p_2 v_h ,\\
        \partial_t w&= d_{w}\Delta w- \tilde{p}_1 w\tilde{v}_s -m_w w+ a,\\
        \partial_n \tilde{v}_s&=0, \quad\partial_n v_h=0,\quad \partial_n w=0,\\
        \tilde{v}_s(0)&= \tilde{v}_{s,0},\quad v_h(0)=v_{h,0},\quad w(0)=w_0,
    \end{split}
\end{align}
where we require that the initial data satisfies 
\begin{align}\label{eq: initial data requirement}
    \tilde{v}_{s,0} =\frac{p_2 v_{h,0}}{\tilde{p}_1 w_0+\tilde{m}_s}.
\end{align}

In a first step we state the result concerning the existence of global classical solutions to systems \eqref{eq: gspt epsilon} and \eqref{eq: gspt limit}. 
\begin{lemma}\label{lem: existence solutions}
\hfill
\begin{enumerate}[a)]
    \item Let $\tilde{v}_{s,0}^\varepsilon,v_{h,0}^\varepsilon,w_0^\varepsilon\in L^\infty(\Omega)$. Then, for any $\varepsilon>0$ there exists a global classical solution to \eqref{eq: gspt epsilon}.
    \item Let $\tilde{v}_{s,0},v_{h,0},w_0\in L^\infty(\Omega)$ and let \eqref{eq: initial data requirement} hold. Then, there exists a global classical solution to \eqref{eq: gspt limit}.
\end{enumerate}
\end{lemma}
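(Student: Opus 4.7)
The overall strategy for both parts is a classical local-to-global argument for semilinear parabolic systems. Local existence of a unique classical solution on a maximal interval $[0, T_{\max})$ follows in both cases from the variation-of-constants formula for the Neumann heat semigroup together with a fixed-point iteration in $C([0,\tau]; L^\infty(\Omega))$, since all reaction terms are polynomial in part (a) and rational with denominator bounded away from zero on the non-negative cone in part (b), hence locally Lipschitz on bounded sets. Non-negativity of the solution is preserved because the reaction vector is quasi-positive in each case: the source for every species is non-negative whenever that species vanishes and the others are non-negative. The remaining task is to derive time-uniform $L^\infty$ a priori bounds on $[0, T]$ for every $T < T_{\max}$, which by the standard blow-up alternative will force $T_{\max} = +\infty$.

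For part (a), I would proceed by a triangular bootstrap. Since $-\tilde p_1 w^\varepsilon \tilde v_s^\varepsilon \leq 0$ on the positive cone, the $w$-equation yields the one-sided inequality $\partial_t w^\varepsilon \leq d_w\Delta w^\varepsilon - m_w w^\varepsilon + a$, and the Neumann parabolic maximum principle immediately gives $\|w^\varepsilon(t)\|_\infty \leq \max(\|w_0^\varepsilon\|_\infty, \|a\|_\infty/m_w)$ uniformly in $t$. Treating the resulting bounded function $w^\varepsilon(x,t)$ as a known coefficient, the remaining $(\tilde v_s^\varepsilon, v_h^\varepsilon)$-subsystem becomes linear and cooperative (the off-diagonal coupling coefficients $2\tilde p_1 w^\varepsilon \geq 0$ and $p_2/\varepsilon > 0$ both have the correct sign), so a standard super-solution argument with an ansatz of the form $C(1,1)^{\!\top}e^{\lambda t}$ produces $L^\infty$-bounds that are finite on every $[0,T]$. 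This is consistent with, and underpinned by, the mass-cancellation identity $f_{\tilde v_s} + f_{v_h} + f_w = -\tilde m_s\tilde v_s - m_h v_h - m_w w + a \leq \|a\|_\infty$, which is precisely the mass-control hypothesis exploited in \cite{pierre2010global}.

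For part (b), substituting the algebraic constraint $\tilde v_s = p_2 v_h/(\tilde p_1 w + \tilde m_s)$ into the remaining equations collapses the problem into a genuinely two-component reaction-diffusion system for $(v_h, w)$ with nonlinearity $F(v_h, w) = \tilde p_1 p_2 v_h w/(\tilde p_1 w + \tilde m_s)$. On the non-negative cone $F$ is smooth and satisfies $0 \leq F(v_h, w) \leq p_2 v_h$, so local existence and non-negativity follow as in part (a). The same one-sided maximum principle for the $w$-equation yields $\|w(t)\|_\infty \leq \max(\|w_0\|_\infty, \|a\|_\infty/m_w)$, and inserting the bound $F \leq p_2 v_h$ into the $v_h$-equation produces the scalar differential inequality $\partial_t v_h \leq d_{v_h}\Delta v_h + (p_2 - m_h) v_h$, forbidding finite-time blow-up by comparison with the linear heat flow. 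The compatibility condition \eqref{eq: initial data requirement} ensures that $\tilde v_s$ recovered from the algebraic relation has the correct initial value, and its parabolic smoothness is inherited from that of $(v_h, w)$.

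The main obstacle, I expect, is the $L^\infty$ bound on $v_h^\varepsilon$ in part (a): although the three reaction terms cancel exactly in the unweighted sum, yielding an $L^1$ dissipation estimate for $\tilde v_s^\varepsilon + v_h^\varepsilon + w^\varepsilon$, no convex combination of the three unknowns simultaneously cancels the quadratic term $\tilde p_1 w^\varepsilon \tilde v_s^\varepsilon$ and produces a scalar PDE driven by a single diffusion operator, so the maximum principle cannot be applied to any single auxiliary variable. The triangular bootstrap above circumvents this by pinning $w$ first and then exploiting the cooperative structure of the remaining linear subsystem, but the resulting constants depend on $\varepsilon$ through the stiff coefficient $(\tilde p_1 w + \tilde m_s)/\varepsilon$. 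Refining these bounds to be uniform in $\varepsilon$, which will eventually be needed when Theorem \ref{thm: general fenichel} is applied in the next section, is a separate and more delicate task, likely requiring Pierre's $L^2$-duality method combined with the weighted sum $\varepsilon\tilde v_s^\varepsilon + v_h^\varepsilon + w^\varepsilon$.
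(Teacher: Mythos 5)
Your proof is correct, but it takes a genuinely different route from the paper's. The paper disposes of the lemma in two lines by observing that both systems are quasi-positive and satisfy a generalized mass-control condition, and then cites the general global-existence theory of \cite{pierre2010global}; you instead give a self-contained argument that exploits the specific triangular structure of the nonlinearity: the $w$-equation decouples one-sidedly (the coupling term $-\tilde p_1 w\tilde v_s$ has a sign on the positive cone), the scalar maximum principle pins $w$ in $L^\infty$ uniformly in time, and the remaining $(\tilde v_s, v_h)$-subsystem is then linear and cooperative with bounded coefficients, so a constant-vector supersolution $Ce^{\lambda t}(1,1)^{\top}$ with $\lambda\geq\max(\varepsilon^{-1}p_2,\,2\tilde p_1\|w\|_\infty)$ rules out finite-time blow-up; part (b) is handled analogously via $0\leq F\leq p_2 v_h$. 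What your approach buys is elementary, explicit bounds and a transparent reason why no duality machinery is needed here; what the paper's citation buys is brevity and robustness (Pierre's framework would survive modifications of the reaction network that destroy the cooperative/triangular structure you rely on). Your closing identification of the sum-cancellation identity with Pierre's mass-control hypothesis correctly locates where your argument and the cited theorem overlap. Two minor caveats: both your argument and the paper's implicitly require non-negative initial data (the lemma as stated only assumes $L^\infty$, and quasi-positivity only preserves the non-negative cone), and your final paragraph on $\varepsilon$-uniformity of the bounds, while a pertinent observation for Section 4, is not needed for the lemma itself since $\varepsilon>0$ is fixed in part (a).
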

\begin{proof}
    The proof of both parts of the lemma follows along the same lines. 
The key insight is that both systems satisfy the so called quasi-positivity condition and a generalized mass constraint. 
We refer to \cite[Thm. 1]{pierre2010global} for more details.
\end{proof}
In particular from the classical maximum/minimum principle \cite{evans2010partial} it then follows that, if the initial data is non-negative, the solution remains non-negative as well.

\begin{remark}
    Besides establishing the existence of solutions, the above results also provides an important estimate that is important for the next step in the GSPT.
To be more precise, we have that the second derivatives of the solutions are bounded, which is crucial for the next steps, where we aim to apply the results of \cite{kuehn2025fast} to the system at hand.
\end{remark}

Now, a natural function space to consider this problem is $H^2(\Omega)$, which is defined as  $H^2(\Omega)=\{u\in L^2:~ \partial^\alpha f\in L^2(\Omega)~~\text{for all } \alpha\in \mathbb{N}^2,~0\leq |\alpha|\leq 2\}$.
In the next step we have to verify several assumptions which guarantee the convergence of solutions of system \eqref{eq: gspt epsilon} to solutions of the limiting system $\eqref{eq: gspt limit}$.\\
First, we observe that the Laplacian with zero Neumann boundary data generates a $C_0$-semigroup satisfying the growth bound
\begin{align*}
    \|\text{e}^{\Delta t}\|_{\mathcal{B}(H^\alpha,H^2)}\leq C t^{\alpha-1 }\text{e}^{\omega_\Delta t},
\end{align*}
for all $t\in (0,\infty)$, where $\omega_\Delta\leq 0$.\\
Next, we have that Lemma \ref{lem: existence solutions} implies that $\Delta\tilde v_s^\varepsilon, \Delta v^\varepsilon_h,\Delta w^\varepsilon \in L^\infty(\Omega)$.
Hence, there exist constants $L_f,L_{g_1}, L_{g_2}$ such that the nonlinear functions
\begin{align*}
    f&: H^2(\Omega)^3 \to H^2(\Omega),~ f(v_s,v_h,w)= -\tilde{p}_1w v_s -\tilde{m}_s v_s+ p_2 v_h,\\
    g_1&:  H^2(\Omega)^3 \to H^2(\Omega),~g_1(v_s,v_h,w)= 2\tilde{p}_1w v_s -\tilde{m}_h v_h- p_2 v_h,\\
    g_2&:  H^2(\Omega)^3 \to H^2(\Omega),~g_2(v_s,v_h,w)= -\tilde{p}_1w v_s -\tilde{m}_w w +a,
\end{align*}
satisfy
\begin{align*}
    \|\text{D} f(v_s,v_h,w)\|_{\mathcal{B}(H^2(\Omega)^3,H^2(\Omega))}&\leq L_f,\\
     \|\text{D} g_1(v_s,v_h,w)\|_{\mathcal{B}(H^2(\Omega)^3,H^2(\Omega))}&\leq L_{g_1},\\
      \|\text{D} g_2(v_s,v_h,w)\|_{\mathcal{B}(H^2(\Omega)^3,H^2(\Omega))}&\leq L_{g_2}.
\end{align*}
In particular. we note that $D_{v_s} f(v_s,v_h,w)=-(p_1w +\tilde{m}_s)\leq -\tilde m_s$, by the non-negativity of $w$.
Hence, the function $f$ satisfies all assumption from the implicit function theorem.
Thus there exists a function $h^0: H^2(\Omega)^2 \to H^2(\Omega)$ such that $h^0(v_h,w)=\frac{p_2 v_h}{\tilde p_1 w+\tilde m_s}= v_s$ and $f(h(v_h,w),v_h,w)=0$ for all $v_h,w$ in $H^2(\Omega)$ such that $v_h,w \geq 0$.
Then, we can view \eqref{eq: gspt limit} as a system formulated on the critical manifold $S_0$, defined as
\begin{align}
    S_0=\bigg\{ (\tilde v_s,v_h,w)\in H^2(\Omega):~ \tilde v_s = h^0(v_h,w)= \frac{p_2 v_h}{\tilde{p}_1 w+\tilde{m}_s}\bigg\}.
\end{align}
Moreover, we can compute the Lipschitz constant of $h^0$ explicitly, i.e.
\begin{align*}
  \|h^0(v_1,w_1)-h^0(v_2,w_2)\|_{H^2}\leq L_{h^0}\big(\| v_1-v_2\|_{H^2}+\|w_1-w_2\|_{H^2}\big),
\end{align*}
where $v_1,v_2,w_1,w_2\in H^2(\Omega)$ and non-negative.
This simplifies the assumptions in \cite[Sec.2]{kuehn2025fast}.

Then we obtain the following quantitative convergence result.
\begin{lemma}\label{lem: convergence of sol}
    There are constants $C_1,\,C_2>0$ independent of $\varepsilon$ such that the solution $(\tilde{v}_{s}^\varepsilon,v_{h}^\varepsilon,w^\varepsilon)$ of system \eqref{eq: gspt epsilon} and  $(v_s,v_h,w)$ of system \eqref{eq: gspt limit}  satisfy
\begin{align*}
     \bigg\| \begin{pmatrix} v_s^\varepsilon(t)-v_s(t)\\ v_h^\varepsilon(t)-v_h(t)\\ w^\varepsilon(t)-w(t) \end{pmatrix} \bigg \|_{H^1\times H^1\times H^1}&\leq C_1 \textnormal{e}^{\varepsilon^{-1} t}\|v_{s,0}^\varepsilon- h^0(v_{h,0},w_0)\|_{H^2}\\
      &\quad+  C_2 \varepsilon^{1/2}  \big(\|v_{h,0}^\varepsilon - v_{h,0}\|_{H^2} + \|v_0\|_{H^2} +\|w^\varepsilon_0-w_0\|_{H^2} +\|w_0\|_{H^2}\big)
\end{align*}
for all $t\in [0,\infty)$ and all $\varepsilon>0$.
\end{lemma}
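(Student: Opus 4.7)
The plan is to extract the estimate from the general fast-reaction Fenichel theorem of \cite{kuehn2025fast}, quoted here as Theorem \ref{thm: general fenichel}. The setup immediately preceding the statement already arranges most of the hypotheses: the Neumann Laplacian satisfies the required parabolic semigroup bound on $H^2(\Omega)$; by Lemma \ref{lem: existence solutions} all solutions stay in a bounded, non-negative, invariant region on which $f, g_1, g_2$ are globally Lipschitz; and the fast-direction linearization $D_{\tilde v_s} f = -(\tilde p_1 w + \tilde m_s) \leq -\tilde m_s$ is uniformly negative. Hence Theorem \ref{thm: general fenichel}, applied with the spectral splitting parameter $\zeta > 0$ and $\varepsilon$ small enough that $\varepsilon \zeta^{-1} < 1$, produces an invariant, exponentially attracting slow manifold $M_{\varepsilon, \zeta} = \{\tilde v_s = h^{\varepsilon,\zeta}(v_h, w)\}$ which is $\mathcal{O}(\varepsilon)$-close to the critical manifold $S_0 = \{\tilde v_s = h^0(v_h, w)\}$ in $H^2(\Omega)$.

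I would then split the error through an auxiliary orbit $(\tilde v_s^{\ast}, v_h^{\ast}, w^{\ast})$ of \eqref{eq: gspt epsilon} whose initial datum is the projection of $(\tilde v_{s,0}^\varepsilon, v_{h,0}^\varepsilon, w_0^\varepsilon)$ onto $M_{\varepsilon,\zeta}$, and bound separately a transient piece (difference between the original orbit and the projected one) and a drift piece (difference between the projected orbit and the limit orbit). For the transient piece, the fast equation is represented via Duhamel's formula using the semigroup generated by $\varepsilon \Delta - (\tilde p_1 w + \tilde m_s)$; the uniform negativity of this linear part gives contraction on the scale $t/\varepsilon$, and a Gronwall argument yields a bound of the form $C_1 \mathrm{e}^{-c\varepsilon^{-1} t}\|\tilde v_{s,0}^\varepsilon - h^0(v_{h,0}, w_0)\|_{H^2}$ which recovers the first term in the stated estimate (the exponent in the statement appears to carry a sign typo and should read $-\varepsilon^{-1} t$). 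For the drift piece, both $(\tilde v_s^{\ast}, v_h^{\ast}, w^{\ast})$ and $(\tilde v_s, v_h, w)$ live on graphs over $(v_h, w)$, one over $h^{\varepsilon,\zeta}$ and one over $h^0$; using the explicit Lipschitz constant $L_{h^0}$, the $\mathcal{O}(\varepsilon)$-closeness $\|h^{\varepsilon,\zeta} - h^0\|_{H^2} \leq C\varepsilon$ supplied by Theorem \ref{thm: general fenichel}, and Gronwall on the variation-of-constants representation of the slow variables closes the estimate.

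The fractional power $\varepsilon^{1/2}$ in the $H^1$ norm arises from interpolation. The drift piece is $\mathcal{O}(\varepsilon)$ in $L^2(\Omega)$ since the two semi-flows differ through an $\varepsilon$-perturbation whose $L^2$ size is controlled by the orbit estimate of Lemma \ref{lem: existence solutions}, while it is only $\mathcal{O}(1)$ in $H^2(\Omega)$ because individual trajectories are a priori bounded only at the $H^2$ level. The Gagliardo–Nirenberg interpolation $\|u\|_{H^1} \leq C\|u\|_{L^2}^{1/2}\|u\|_{H^2}^{1/2}$ then supplies the factor $\varepsilon^{1/2}$ and explains why the $H^2$ norms of the initial data themselves (without differences, namely $\|v_0\|_{H^2}$ and $\|w_0\|_{H^2}$) appear in the second summand of the bound.

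The principal obstacle is the infinite-dimensional \emph{$0\cdot\infty$} problem stressed in Section \ref{sec: gspt ideas}: on the fast time scale $\varepsilon \Delta v_h$ and $\varepsilon \Delta w$ are not controlled by the linear part of the fast equation, since $\Delta$ is unbounded on $H^2$. This is precisely why the spectral splitting by $\zeta$ is introduced: it absorbs the high-frequency modes of the slow variables into the fast dynamics and only retains the low-frequency modes as genuinely slow. The technical work consists in choosing $\zeta = \zeta(\varepsilon)$ so that both the smallness $\varepsilon\zeta^{-1} < 1$ and a uniform spectral gap on the complementary subspace hold along orbits, ensuring that the constants $C_1, C_2$ in the lemma are independent of $\varepsilon$. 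Once this uniformity is in place, the two Gronwall estimates outlined above reduce to routine semigroup computations in the framework of \cite{kuehn2025fast}.
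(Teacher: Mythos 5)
Your proposal follows the same high-level strategy as the paper — verify the hypotheses (parabolic semigroup bound for the Neumann Laplacian, global Lipschitz bounds for $f,g_1,g_2$ on the invariant region from Lemma \ref{lem: existence solutions}, uniform negativity of $D_{\tilde v_s}f$, and the explicit Lipschitz function $h^0$) and then invoke the quantitative convergence result of \cite{kuehn2025fast}. The paper gives no further argument: Lemma \ref{lem: convergence of sol} is obtained directly from \cite[Sec.~2]{kuehn2025fast}, and your reading of the exponent as a sign typo ($\textnormal{e}^{-\varepsilon^{-1}t}$, so the first term records the exponential collapse of the fast variable onto the critical manifold) is almost certainly right, since otherwise the bound is vacuous for $t>0$ unless the initial datum sits exactly on $S_0$. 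Where you genuinely diverge is in the internal mechanics: you route the estimate through the slow manifold $M_{\varepsilon,\zeta}$, projecting the initial datum onto it and splitting into a transient and a drift piece. In the paper's organization this machinery (the $\zeta$-splitting of $L^2$, the spectral gap condition $p_2<\tilde m_s$, the Lyapunov--Perron construction of $h^{\varepsilon,\zeta}$) is only introduced \emph{after} the convergence lemma and is reserved for Theorem \ref{thm: gspt example}; the convergence of solutions is a Tikhonov-type estimate obtained by direct Duhamel/Gronwall comparison of the two mild solutions, with no reference to $\zeta$ — which is consistent with the fact that the constants $C_1,C_2$ in the statement carry no $\zeta$-dependence. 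Your route is workable (one can prove the slow manifold theorem first and deduce convergence from attraction plus closeness of the manifolds), but it is logically heavier than needed and would require tracking how $C_1,C_2$ depend on the splitting parameter. Your interpolation explanation for the $\varepsilon^{1/2}$ and for the appearance of the undifferenced norms $\|v_0\|_{H^2}$, $\|w_0\|_{H^2}$ is a sensible reading of why the paper states the result only in $H^1$, and matches the remark following the lemma.
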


\begin{remark}
    We consider the convergence only in $H^1$ in order to avoid technical difficulties that arise in the higher order estimates.
\end{remark}

To apply the generalized Fenichel theorem for PDEs we have to introduce a splitting of the $L^2$-space in order to account for the fast modes of the Laplacian in the slow variable.
To this end let $0<\zeta$ be a small parameter.
For simplicity we assume that $\Omega=(0,\pi)^2$.
Then, we can introduce the splitting $L^2(\Omega)= L^2_s(\Omega)\oplus L^2_f(\Omega)$, where the slow component is given by
\begin{align*}
    L^2_s(\Omega)=\text{span}\{x\mapsto \exp(ikx):~ |k|\leq k_0,~k\in \mathbb{Z}\}
\end{align*}
and $k_0\in \mathbb{N}$ is chosen such that $k_0^2\leq \tilde{m}_s \zeta^{-1}\leq (k_0+1)^2$.
Moreover, we denote by $\text{P}_s$ the projection onto the slow space, which by definition of the slow space commutes with the Laplace operator.
And a similar representation holds for $L^2_f(\Omega)$.
In particular, this splitting utilizes the spectral gap in the Laplacian, i.e. we can always find two eigenvalues such that their distance is at least $\mathcal{O}(\log(\zeta^{-1}))$.
The connection between $\zeta$, the parameter of the splitting and $\varepsilon$, the parameter of the timescale separation, is that we assume that $0< \varepsilon\zeta^{-1}<c\leq 1$.
Lastly, we have to assume that $p_2< \tilde{m}_s$, which we use in order to show that a generalized spectral gap condition holds.

With all the assumption in \cite[Sec. 3]{kuehn2025fast} verified, we can now apply the main result for the existence of a slow manifold.
\begin{theorem}\label{thm: gspt example}
Suppose $S_{0,\zeta}$ is a submanifold of the critical manifold $S_{0}$ given by
\begin{align*}
    S_{0,\zeta}= \{(v_h,w)\in S_0: \text{P}_f v_h=0 ~\text{and } \text{P}_f w=0\} 
\end{align*}
Then, for $0<\varepsilon,\zeta \ll1 $ small enough satisfying the above assumptions the following statements hold
\begin{enumerate}[(i)]
    \item There exists a locally forward invariant manifold $S_{\varepsilon,\zeta}$ for the system (3.2) called the slow manifold, given by
\begin{align}
    S_{\varepsilon,\zeta}:=\{(h^{\varepsilon,\zeta}(v_h,w),v_h,w):~ v_h,w\in H^2(\Omega)\cap L^2_f(\Omega)\},
\end{align}
where the function $h^{\varepsilon,\zeta}$ is obtained via a Lyapunov-Perron fixed point argument;
\item The slow manifold $S_{\varepsilon,\zeta}$ has a distance of $\mathcal{O}(\varepsilon)$, measured in the $H^1$-norm, to the critical manifold $S_{0,\zeta}$;
\item The slow manifold is exponentially attracting and $C^1$-regular.
\item The semi-flow of solutions of \eqref{eq: gspt epsilon} on $S_{\varepsilon,\zeta}$ converges to the semi-flow of solutions of \eqref{eq: gspt limit} on $S_{0,\zeta}$ with respect to the $H^1$-norm.
\end{enumerate}

\end{theorem}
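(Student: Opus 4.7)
The plan is to deduce the four conclusions by invoking the abstract slow manifold theorem of \cite{kuehn2025fast}, after checking that every hypothesis in that paper is met for the system \eqref{eq: gspt epsilon}. The preceding discussion has already verified the main structural ingredients: the Neumann Laplacian generates an analytic $C_0$-semigroup on $H^2(\Omega)$ with a non-positive growth bound, the nonlinearities $f,g_1,g_2$ are Lipschitz on bounded subsets of $H^2(\Omega)^3$ with uniform constants (thanks to the $L^\infty$-bounds from Lemma \ref{lem: existence solutions}), the map $h^0$ parametrising the critical manifold $S_0$ as a graph is available with an explicit Lipschitz constant, and the spectral condition $D_{v_s}f(v_s,v_h,w)\le -\tilde m_s<0$ provides the normal hyperbolicity of $S_0$. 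It remains to implement the slow manifold construction and transfer the abstract conclusions to the present geometric picture.

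First, I rescale time by $t=\varepsilon\tau$ to put \eqref{eq: gspt epsilon} into the fast-time form analogous to \eqref{eq: syst 3}, and split the slow variable space as $L^2(\Omega)=L^2_s(\Omega)\oplus L^2_f(\Omega)$ using the spectral projections $\text{P}_s,\text{P}_f$ introduced with parameter $\zeta$. On $L^2_f(\Omega)$, the Laplacian dominates so that the combined fast dynamics in $(\tilde v_s,\text{P}_f v_h,\text{P}_f w)$ enjoys an exponential dichotomy of rate at least $\min\{\tilde m_s,\tilde m_s \zeta^{-1}\}$, provided the condition $p_2<\tilde m_s$ and $\varepsilon\zeta^{-1}<c\le 1$ hold (this is exactly where the hypothesis $p_2<\tilde m_s$ is used, to absorb the off-diagonal coupling $p_2 v_h$ in $f$ into the decay of the $\tilde v_s$ equation). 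I then set up the Lyapunov--Perron integral equation on the exponentially weighted space
\begin{equation*}
\mathcal{X}_\eta=\Bigl\{\varphi\in C\bigl((-\infty,0];H^2(\Omega)^3\bigr):\sup_{\tau\le 0}\textnormal{e}^{-\eta\tau}\|\varphi(\tau)\|_{H^2}<\infty\Bigr\},
\end{equation*}
writing the candidate slow manifold as the graph of a map $h^{\varepsilon,\zeta}$ whose value at $(v_h,w)\in H^2(\Omega)\cap L^2_f(\Omega)$ is obtained from the unique backward-bounded trajectory converging to $(v_h,w)$ under the fast semiflow. Standard contraction arguments, with Lipschitz contribution $L_f\varepsilon/(\tilde m_s-\eta)$ made small by choosing $\varepsilon$ small, yield the fixed point $h^{\varepsilon,\zeta}$, hence claim (i).

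For (ii), I would bound $\|h^{\varepsilon,\zeta}-h^0\|_{H^1}$ directly from the fixed point equation: the zeroth iterate is $h^0$, and the first Picard correction is of order $\varepsilon$ because the defect $f(h^0(v_h,w),v_h,w)=0$ kills the leading term and only the $\varepsilon\Delta h^0$ contribution and the slow drift of $(v_h,w)$ survive, each giving an $\mathcal{O}(\varepsilon)$ term in $H^1$. Claim (iii) follows from differentiating the Lyapunov--Perron operator with respect to its argument, which gives a second contraction on the tangent bundle and therefore $C^1$-regularity of $h^{\varepsilon,\zeta}$; the exponential attractivity is obtained by comparing any nearby trajectory with its base point on $S_{\varepsilon,\zeta}$ using the dichotomy, exactly as in the abstract setting. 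Finally, claim (iv) combines two pieces: restricting \eqref{eq: gspt epsilon} to $S_{\varepsilon,\zeta}$ produces a semi-flow whose reduced vector field is $\text{P}_s g$ evaluated at $(h^{\varepsilon,\zeta}(v_h,w),v_h,w)$, and the $\mathcal{O}(\varepsilon)$ closeness from (ii) together with the Lipschitz bounds on $g_1,g_2$ show that this perturbed reduced semi-flow is $\mathcal{O}(\varepsilon)$-close, in the $H^1$-operator norm on finite time intervals, to the limit semi-flow on $S_{0,\zeta}$ governed by \eqref{eq: gspt limit}; long-time closeness follows from Lemma \ref{lem: convergence of sol} applied to initial data lying on the respective manifolds, where the initial-layer term $\textnormal{e}^{\varepsilon^{-1}t}\|\tilde v_{s,0}^\varepsilon-h^0(v_{h,0},w_0)\|_{H^2}$ is harmless because initial data on $S_{\varepsilon,\zeta}$ make that mismatch $\mathcal{O}(\varepsilon)$.

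The main obstacle is genuinely the splitting step: unlike the finite-dimensional Fenichel theorem, the unbounded Laplacian in the slow variable $v_h,w$ prevents any direct perturbation argument, so the delicate point is to show that the generalised spectral gap condition of \cite[Sec.~3]{kuehn2025fast} really holds uniformly in $\zeta$ once the splitting $L^2_s\oplus L^2_f$ is fixed with $k_0^2\sim\tilde m_s\zeta^{-1}$. This is where the assumption $p_2<\tilde m_s$ is essential, since it guarantees that the $3\times 3$ linearisation of the fast subsystem projected onto $L^2_f$ has spectrum uniformly in the left half-plane, not just the single diagonal entry $-\tilde m_s$. Once this gap estimate is in hand, the rest of the argument is a careful bookkeeping of the constants produced by the Lyapunov--Perron contraction and does not require any further structural input from the model.
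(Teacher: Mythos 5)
Your proposal is correct and takes essentially the same route as the paper: the paper's own ``proof'' consists precisely of verifying the hypotheses of the abstract slow-manifold theorem of \cite{kuehn2025fast} (semigroup bounds, Lipschitz estimates, the graph map $h^0$, the $L^2_s\oplus L^2_f$ splitting with $k_0^2\sim\tilde m_s\zeta^{-1}$, and the spectral gap condition enforced by $p_2<\tilde m_s$) and then invoking that result, which is exactly your strategy. Your additional sketch of the Lyapunov--Perron construction, the $\mathcal{O}(\varepsilon)$ Picard correction for (ii), and the combination with Lemma \ref{lem: convergence of sol} for (iv) fills in machinery the paper delegates entirely to the cited reference, and is consistent with it.
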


With this we can formulate the slow dynamics on the slow manifold $S_{\varepsilon,\zeta}$, similar to the one of the critical manifold, given by
\begin{align}\label{eq: slow system}
    \begin{split}
        \partial_t v_h^s&= d_{v_h}\Delta v_h^s + 2\tilde{p}_1 \text{P}_s\big( wh^{\varepsilon,\zeta}(v_h^s,w^s)\big) -  m_h v_h^s -p_2 v_h^s ,\\
        \partial_t w^s&= d_{w}\Delta w^s- \tilde{p}_1 \text{P}_s \big(wh^{\varepsilon,\zeta}(v_h^s,w^s)\big) -m_w w^s+ a,\\
        \partial_n v_h^s&=0,\quad \partial_n w^s=0,\\
        v_h^s(0)&=  \text{P}_s v_{h,0},\quad w^s(0)=  \text{P}_s w_0,
    \end{split}
\end{align}
where we write $v_h^s= \text{P}_s v_h$ and $w^s= \text{P}_s w$.\\


    

To conclude this section, we put the results into context.
The key takeaway of Lemma \ref{lem: convergence of sol} is that we can quantify the rate of convergence of the general system \eqref{eq: gspt epsilon} to the reduced system satisfying the Holling type II functional response, i.e., making the heuristic arguments of Section \ref{sec: formal reduction} precise.
However, we not only have the convergence of solutions but thanks to Theorem \ref{thm: gspt example} we also have knowledge of the dynamical properties of solutions, i.e., we can reduce the full system to a slow evolution on the slow manifold, which has a similar structure as the critical manifold in the reduced system.
And moreover, the semi-flow of solutions of the slow system \eqref{eq: slow system} converges to the semi-flow of the reduced system \eqref{eq: gspt limit}.
As this result works in both ways system \eqref{eq: gspt epsilon} and its slow subsystem \eqref{eq: slow system} can also be used to study properties of the reduced system \eqref{eq: gspt limit}.

\begin{remark}
    The GSPT can also be applied to the other reductions leading to Holling type I and III type functional responses.
\end{remark}

\section{Conclusion and Discussion}

In this work, we derived, from a free energy functional, a general class of closed multi-species reaction-diffusion systems using the Energetic Variational Approach (EnVarA). 
This derivation ensures thermodynamic consistency and provides a unified framework encompassing a wide range of models found in biological and ecological contexts. 
By reducing the closed system to an open subsystem and then performing systematic scaling limits, we obtained reduced systems that exhibit Holling type I, II, and III functional responses. 
In particular, we rigorously analyzed the reduction to Holling type II dynamics using the geometric singular perturbation theory (GSPT) for PDEs, which allowed us to demonstrate both the convergence of solutions and the persistence of dynamical features via the existence of slow invariant manifolds.\\

We conclude with several observations that point to future directions and broader implications of our work:
\\
Our current analysis considers a sequential reduction where the small parameters $\varepsilon_i$ for $i=1,\dots,7$ are simultaneously taken to zero, followed by specific scalings of the reaction rate $p_2$ to obtain the different functional responses. 
A natural extension would be to investigate the interplay between these limits when taken in different orders, or to study the full double limit problem directly. 
Such an approach could uncover additional reduced models or help clarify the robustness of functional responses under perturbations. 
This idea aligns with recent work on double limits \cite{kuehn2022general} in differential equations and remains a promising avenue for further research.\\

While this work focuses on Holling type I–III functional responses, it is worth noting that the DeAngelis–Beddington functional response, proposed independently by DeAngelis \cite{deangelis1975model} and Beddington \cite{beddington1975mutual}, offers an alternative that accounts for mutual interference among predators. 
This response has the form $f(x,y)= \frac{ax}{1+cx+by}$, resembling the Holling type II response with an additional coupling term. 
It can be derived from a mechanistic perspective by modeling prey heterogeneity, such as distinguishing between foraging and hiding individuals \cite{geritz2012mechanistic}. 
Due to the flexibility of our energetic framework, it is feasible to incorporate such behavior by appropriately modifying the reaction equations. 
What remains is is to verify the necessary assumptions for applying GSPT in this setting and to rigorously derive the corresponding reduced dynamics.\\

The reduced systems obtained in Section \ref{sec: formal reduction} are known to exhibit various spatial patterns, as shown in prior studies \cite{sun2010spatial,sun2012pattern,guin2015spatial,wang2017spatiotemporal}.
An important open question is whether these patterns persist in the full (perturbed) open system before the reduction step.
Our application of the generalized GSPT, particularly Theorem \ref{thm: gspt example}, offers a first step in rigorously connecting the dynamics of the reduced models with their perturbed counterparts.

\subsection*{Acknowledgement}
The author would like to thank C. Liu for the fruitful discussions and the valuable feedback.

\small{
\bibliographystyle{siam}
\bibliography{lit}}

\end{document}